\nonstopmode \numberwithin{equation}{section}
\newtheorem{theorem}{Theorem}
\newtheorem{corollary}{Corollary}[section]
\begin{document}
\title[ integrals associated with the product of generalized Struve function]{Certain new
unified integrals associated with the product of generalized Struve function}
\author[K. S.  Nisar]{Kottakkaran Sooppy  Nisar}
\address{ K. S. Nisar: Department of Mathematics, College of Arts and
Science-Wadi Al dawaser, Prince Sattam bin Abdulaziz University, Riyadh
region 11991, Saudi Arabia}
\email{ksnisar1@gmail.com, n.sooppy@psau.edu.sa}

\subjclass[2000]{Primary 33B20, 33C20; Secondary 33B15 ,33C05.}
\keywords{Pochhammer symbol, Gamma function, Generalized hypergeometric
function $_pF_q$ , Generalized (Wright) hypergeometric
functions $_p{\Psi}_q$ , generalized Struve function and
Oberhettinger integral formula.}

\begin{abstract}
We aim to present two new generalized integral formulae involving product
of generalized Struve function $\mathcal{W}_{p,b,c}\left( z\right)$,  which are
expressed in terms of the generalized Lauricella functions.
The main results presented here, being very general character,
reduce to yield known and new integral formulae. Some  special cases of our main results are also considered.
\end{abstract}

\maketitle

\section{Introduction and Preliminaries}\label{sec-1}

A  solution of the following non-homogeneous Bessel's differential equation
\begin{equation}\label{StruveH}
x^{2}y^{''}(x)+xy^{'}\left( x\right) +\left(
x^{2}-p^{2}\right) y\left( x\right) =\frac{4\,(x/2)^{\alpha +1}}{\sqrt{\pi }\,
\Gamma \left( {\alpha +1/2}\right) }\quad (\alpha \in \mathbb{C})
\end{equation}
is known as the Struve function $\mathcal{H}_{\alpha }$ of order $\alpha $,
where $\Gamma$ is the familiar gamma function (see, e.g., \cite[Section 1.1]{Sr-Ch-12}).
Here and in the following, let $\mathbb{C}$, $\mathbb{R}^+$, and $\mathbb{N}$ be the sets of complex numbers,
positive real numbers, and positive integers, respectively, and let $\mathbb{N}_0:=\mathbb{N} \cup \{0\}$.
The Struve functions occur in many areas such as water
wave and surface-wave problems (see \cite{Ahmadi-Widnall, Hirata}),
 problems on unsteady aerodynamics \cite{Shaw}, particle quantum dynamical
studies of spin decoherence \cite{Shao} and nanotubes \cite{Pedersen}. The
Struve functions $\mathcal{H}_{v}\left( z\right) $ and $\mathcal{L}_{v}\left( z\right) $ are
defined as  the following infinite series
\begin{equation} \label{Struve}
\mathcal{H}_{\upsilon }\left( z\right) =\left( \frac{z}{2}\right) ^{\upsilon
+1}\sum\limits_{k=0}^{\infty }\frac{\left( -1\right) ^{k}}{\Gamma \left( k+
\frac{3}{2}\right) \Gamma \left( k+\upsilon +\frac{1}{2}\right) }\left(
\frac{z}{2}\right) ^{2k}
\end{equation}
and
\begin{equation}\label{MStruve}
\mathcal{L}_{\upsilon }\left( z\right) =\left( \frac{z}{2}\right) ^{\upsilon
+1}\sum\limits_{k=0}^{\infty }\frac{1}{\Gamma \left( k+\frac{3}{2}\right)
\Gamma \left( k+\upsilon +\frac{1}{2}\right) }\left( \frac{z}{2}\right) ^{2k}.
\end{equation}

The generalized Struve function is given as follows (see, e.g.,  \cite{Nis-Pur-Mond}):
\begin{equation}\label{GStruve}
\mathcal{W}_{p,b,c}\left( z\right) =\sum_{k=0}^{\infty }\frac{\left(
-c\right) ^{k}\left( \frac{z}{2}\right) ^{2k+p+1}}{\Gamma \left( k+\frac{3}{2
}\right) \Gamma \left( k+p+\frac{b+2}{2}\right) } \quad (p,\, b,\, c\in \mathbb{C}).
\end{equation}

An interesting further generalization of the generalized hypergeometric series ${}_pF_q$ (see, e.g., \cite[Section 1.5]{Sr-Ch-12}) is due to Fox \cite{Fox}
and Wright \cite{Wright1, Wright2} who studied the asymptotic expansion of the generalized (Wright) hypergeometric function
defined by (see \cite[p. 21]{Sri-Kar})
\begin{equation} \label{Fox-Wright}
 {}_p\Psi_q \left[ \aligned \left(\alpha_1,\,A_1\right),\,\ldots,\,\left(\alpha_p,\,A_p\right) \, &;\\
                      \left(\beta_1,\,B_1\right),\,\ldots,\,\left(\beta_q,\,B_q\right) \,&; \endaligned
                  \,\, z \right] = \sum_{k=0}^\infty \,\frac{\prod\limits_{j=1}^p\, \Gamma \left( \alpha_j + A_j\,k\right)}{\prod\limits_{j=1}^q\, \Gamma \left( \beta_j + B_j\,k\right)}\, \frac{z^k}{k!},
    \end{equation}
where the coefficients $A_1,\,\ldots,\,A_p \in \mathbb{R}^+$ and $B_1,\,\ldots,\,B_q \in \mathbb{R}^+$ such that
\begin{equation}\label{Condi}
1 + \sum_{j=1}^q\, B_j - \sum_{j=1}^p\, A_j \geqq 0.
    \end{equation}
A special case of \eqref{Fox-Wright} is
\begin{equation} \label{Fox-Wright-a}
 {}_p\Psi_q \left[ \aligned \left(\alpha_1,\,1\right),\,\ldots,\,\left(\alpha_p,\,1\right) \, &;\\
                      \left(\beta_1,\,1\right),\,\ldots,\,\left(\beta_q,\,1\right) \,&; \endaligned
                  \,\, z \right] =\frac{\prod\limits_{j=1}^p\, \Gamma \left( \alpha_j\right)}{\prod\limits_{j=1}^q\, \Gamma \left( \beta_j\right)}\, {}_pF_q \left[ \aligned \alpha_1,\,\ldots,\,\alpha_p\, &;\\
                       \beta_1,\,\ldots,\,\beta_q \,&; \endaligned
                  \,\, z \right].
\end{equation}

\vskip 3mm

An interesting further several-variable-generalization of the generalized Lauricella series (see, for example, \cite[p. 36, Eq. (19)]{Sri-Kar})
is defined by (\emph{cf}. Srivastava and Daoust \cite[p. 454]{Sri-Dao}; see also \cite[p. 37]{Sri-Kar})
\begin{equation}\label{Lauricella-Def}
\aligned
& F_{C:D^{(1)};\cdots;D^{(n)}}^{A:B^{(1)};\cdots;B^{(n)}}\, \left(
                                                  \begin{array}{c}
                                                    z_1 \\
                                                    \vdots \\
                                                    z_n \\
                                                  \end{array}
                                                \right)
             =F_{C:D^{(1)};\cdots;D^{(n)}}^{A:B^{(1)};\cdots;B^{(n)}}
                \left(\aligned
            &\,\, [(a):\theta^{(1)},\ldots,\theta^{(n)}]:\\
            &[(c):\psi^{(1)},\ldots,\psi^{(n)}]:
            \endaligned\right.\\
              & \hskip 45mm   \left.
            \aligned
            &[(b)^{(1)}:\phi^{(1)}];\ldots;[(b)^{(n)}:\phi^{(n)}]; \\
            &[(d)^{(1)}:\delta^{(1)}];\ldots;[(d)^{(n)}:\delta^{(n)}];
            \endaligned \, z_{1},\ldots,z_{n}\right)\\
        &  \hskip 35mm   =\sum_{k_{1},\ldots,k_{n}=0}^{\infty}\, \Omega (k_{1},\ldots,k_{n})\,
                \frac{z_{1}^{k_{1}}}{k_{1}!}\cdots
                     \frac{z_{n}^{k_{n}}}{k_{n}!},
        \endaligned
      \end{equation}
where, for convenience,
\begin{equation}\label{Omega} \Omega(k_{1},\ldots,k_{n})=\frac{\displaystyle{\prod_{j=1}^{A}}(a_{j})_{k_{1}\theta^{(1)}_{j}+\cdots+k_{n}\theta_{j}^{(n)}}
            \,\prod_{j=1}^{B^{(1)}}(b^{(1)}_{j})_{k_{1}\phi^{(1)}_{j}}\cdots\prod_{j=1}^{B^{(n)}}(b_{j}^{(n)})_{k_{n}\phi_{j}^{(n)}}}
            {\displaystyle{\prod_{j=1}^{C}}(c_{j})_{k_{1}\psi^{(1)}_{j}+\cdots+k_{n}\psi_{j}^{(n)}}
            \prod_{j=1}^{D^{(1)}}(d^{(1)}_{j})_{k_{1}\delta^{(1)}_{j}}\cdots\prod_{j=1}^{D^{(n)}}(d_{j}^{(n)})_{k_{n}\delta_{j}^{(n)}}},
   \end{equation}
the coefficients
\begin{equation}\label{Condition}
\left\{\aligned
&\theta_{j}^{(m)}\,\,(j=1,\ldots,A); \,\,\phi_{j}^{(m)}\,\,(j=1,\ldots,B^{(m)});\\
&\psi_{j}^{(m)}\,\,(j=1,\ldots,C);\,\,\delta_{j}^{(m)}\,\,(j=1,\cdots,D^{(m)}); \,\, \forall \, m  \in \{1,\ldots,n \}
\endaligned  \right.
\end{equation}
are real and positive, and $(a)$ abbreviates the array of $A$  parameters $a_{1},\ldots,a_{A},$ $(b^{(m)}) $ abbreviates the array of $B^{(m)}$
parameters
$$ b_{j}^{(m)} \quad (j=1,\ldots,B^{(m)}); \quad \forall \,  m  \in \{1,\ldots,n\},$$
 with similar interpretations for $(c)$ and $(d^{(m)})\,\,(m=1,\ldots,n)$; \emph{et cetera}.

For more details about the generalized Lauricella function \eqref{Lauricella-Def},
the reader may be referred, for example,  to \cite{Shukla, Sri-Kar, Sri-Dao, Anvar-Sri, Choi-Nisar}.

\vskip 3mm

We also need the following integral formula due to Oberhettinger \cite{Ober}
\begin{equation}\label{Ober-formula}
\int_{0}^{\infty}\,x^{\mu -1}\left( x+a+\sqrt{x^{2}+2ax}\right) ^{-\lambda
}dx=2\lambda a^{-\lambda }\left( \frac{a}{2}\right) ^{\mu }\frac{\Gamma
\left( 2\mu \right) \Gamma \left( \lambda -\mu \right) }{\Gamma \left(
1+\lambda +\mu \right) }
\end{equation}
\begin{equation*}
  \left(a \in \mathbb{R}^+;\,  0<\Re (\mu)<\Re (\lambda)\right).
\end{equation*}

Many integral formulas involving Bessel functions, generalized Bessel functions, and modified Bessel functions,
and so on, have recently been established (see, e.g., \cite{Praveen,Ali,Choi2,Choi-Mathur,Nisar-Saiful,Nisar-Parmar}).
In this sequel, we aim to present two new integral formulas involving a finite product of the generalized Struve
functions \eqref{GStruve}, which are expressed in terms of the generalized Lauricella function \eqref{Lauricella-Def}. The main formulas presented here, being very general,  reduce to yield
known and new integral formulas. Some interesting special cases are also considered.

\section{Main results}\label{sec-2}

Here, we establish two integral formulas involving a finite product of the generalized Struve
functions \eqref{GStruve}, which are  asserted by Theorems \ref{Th1} and \ref{Th2}.

\vskip 3mm

\begin{theorem} \label{Th1}
Let $n \in \mathbb{N}$ be fixed and $a,\, y_j \in \mathbb{R}^+$ $(j=1,\ldots,n)$.
Also, let $b,\,c,\,p_j,\, \lambda,\,\mu \in \mathbb{C}$ $(j=1,\ldots,n)$ with
$0<\Re(\mu)<\Re \left(\lambda + \mathbf{p}  \right) +n$, where
\begin{equation}\label{p}
  \mathbf{p}:= \sum_{j=1}^{n}\,p_j.
\end{equation}
Then
\begin{equation}\label{Th1-eq1}
 \aligned
   &\int_{0}^{\infty }x^{\mu -1}\left( x+a+\sqrt{x^{2}+2ax}\right) ^{-\lambda
}\prod\limits_{j=1}^{n}\mathcal{W}_{p_{j},b,c}\left( \frac{y_{j}}{x+a+\sqrt{
x^{2}+2ax}}\right)\, dx \\
 &= \mathcal{A} (a,\mathbf{p}, \lambda, \mu, p_j,y_j:j=1,\ldots,n)
 \\
&\hskip 5mm \times F^{2:1;\cdots;1}_{\,2:2;\cdots;2}\, \bigg(
    \aligned
      & [1+\lambda+\mathbf{p}+n:2,\ldots,2], [\lambda+\mathbf{p}+n-\mu:2,\ldots,2] :  \\
      & [\lambda+\mathbf{p}+n:2,\ldots,2], [1+\lambda+\mathbf{p}+n+\mu:2,\ldots,2]:
      \endaligned  \\
  & \hskip 5mm   \aligned
      & \hskip 65mm  [1:1]; \cdots ; [1:1]; \\
    & \Big[\frac{3}{2}:1\Big], \Big[p_1 + \frac{b+2}{2}:1\Big];\cdots  ;\Big[\frac{3}{2}:1\Big],
     \Big[p_n + \frac{b+2}{2}:1\Big];
   \endaligned  -\frac{c\,y_1^2}{4\,a^2},\cdots, -\frac{c\,y_n^2}{4\,a^2}   \bigg),
 \endaligned
\end{equation}
where $1;\cdots;1$ and $2;\cdots;2$ are $n$ arrays, respectively, and
   \begin{equation*}
\aligned &
    \mathcal{A} (a,\mathbf{p}, \lambda, \mu, p_j,y_j:j=1,\ldots,n) \\
    & := \frac{(\lambda+ \mathbf{p}+n)\, 2^{1-\mu-\mathbf{p}-n}\,a^{\mu-\lambda-\mathbf{p}-n}\, \Gamma (2 \mu)\,\Gamma (\lambda+\mathbf{p}+n-\mu)\,
      \prod\limits_{j=1}^{n} {y_j}^{p_j+1}}{\left\{\Gamma\left(\frac{3}{2}\right)\right\}^n\,
     \Gamma (1+\lambda+\mathbf{p}+n+\mu)\,\prod\limits_{j=1}^{n} \Gamma \left(p_j + \frac{b+2}{2}\right) }.
\endaligned
  \end{equation*}

\end{theorem}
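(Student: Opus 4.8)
The plan is to expand each generalized Struve function under the integral sign by its defining series \eqref{GStruve}, interchange the (finite product of) summations with the integration, and reduce each resulting term to an Oberhettinger integral \eqref{Ober-formula}. First I would write, for each $j \in \{1,\ldots,n\}$,
\begin{equation*}
\mathcal{W}_{p_j,b,c}\!\left(\frac{y_j}{x+a+\sqrt{x^2+2ax}}\right) = \sum_{k_j=0}^{\infty} \frac{(-c)^{k_j}\,(y_j/2)^{2k_j+p_j+1}}{\Gamma\!\left(k_j+\tfrac{3}{2}\right)\Gamma\!\left(k_j+p_j+\tfrac{b+2}{2}\right)}\,\bigl(x+a+\sqrt{x^2+2ax}\bigr)^{-(2k_j+p_j+1)}.
\end{equation*}
Substituting this into the left-hand side of \eqref{Th1-eq1} and interchanging the order of summation and integration, the powers of $x+a+\sqrt{x^2+2ax}$ coalesce, so that with $K:=k_1+\cdots+k_n$ the exponent becomes $-(\lambda+\mathbf{p}+n+2K)$, and the remaining integral over $x$ is exactly an Oberhettinger integral with $\lambda$ replaced by $\Lambda:=\lambda+\mathbf{p}+n+2K$.

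Next I would apply \eqref{Ober-formula} to each term; this is legitimate because the hypothesis $0<\Re(\mu)<\Re(\lambda+\mathbf{p})+n$ gives $0<\Re(\mu)<\Re(\Lambda)$ for every $K \in \mathbb{N}_0$. The $x$-integral of the $(k_1,\ldots,k_n)$-term thus evaluates to
\begin{equation*}
2\Lambda\,a^{-\Lambda}\left(\frac{a}{2}\right)^{\mu}\frac{\Gamma(2\mu)\,\Gamma(\Lambda-\mu)}{\Gamma(1+\Lambda+\mu)}.
\end{equation*}
The key algebraic step is to separate the $K$-dependence from the constants. Writing $\Lambda_0:=\lambda+\mathbf{p}+n$ and using $\Gamma(k_j+\tfrac{3}{2})=\Gamma(\tfrac{3}{2})(\tfrac{3}{2})_{k_j}$, $\Gamma(k_j+p_j+\tfrac{b+2}{2})=\Gamma(p_j+\tfrac{b+2}{2})(p_j+\tfrac{b+2}{2})_{k_j}$, and $\Gamma(\Lambda-\mu)=\Gamma(\Lambda_0-\mu)(\Lambda_0-\mu)_{2K}$, $\Gamma(1+\Lambda+\mu)=\Gamma(1+\Lambda_0+\mu)(1+\Lambda_0+\mu)_{2K}$, every Gamma factor becomes a constant times a Pochhammer symbol indexed by $2K$ or by a single $k_j$. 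The leading factor $2\Lambda=2(\Lambda_0+2K)$ is handled by the identity
\begin{equation*}
\Lambda_0+2K=\Lambda_0\,\frac{(1+\Lambda_0)_{2K}}{(\Lambda_0)_{2K}},
\end{equation*}
which is precisely what produces the numerator parameter $1+\lambda+\mathbf{p}+n$ together with the denominator parameter $\lambda+\mathbf{p}+n$ in the Lauricella function.

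Finally I would collect the $K$-independent constants into $\mathcal{A}$ and observe that the surviving multiple series is exactly \eqref{Lauricella-Def}. Combining $(-c)^{k_j}$, $(y_j/2)^{2k_j}$ and $a^{-2k_j}$ yields the arguments $z_j=-c\,y_j^2/(4a^2)$; the Pochhammer symbols $(1+\Lambda_0)_{2K}$ and $(\Lambda_0-\mu)_{2K}$ in the numerator and $(\Lambda_0)_{2K}$ and $(1+\Lambda_0+\mu)_{2K}$ in the denominator carry the coefficients $\theta^{(m)}_j=\psi^{(m)}_j=2$, while $(\tfrac{3}{2})_{k_j}$ and $(p_j+\tfrac{b+2}{2})_{k_j}$ become the $\delta$-type denominator parameters $[\tfrac{3}{2}:1]$ and $[p_j+\tfrac{b+2}{2}:1]$; the numerator factors $[1:1]$ supply $(1)_{k_j}=k_j!$, which cancel the $k_j!$ in the denominator of \eqref{Lauricella-Def}. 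A direct comparison of the remaining constants reproduces the displayed $\mathcal{A}$, completing the identification. The only genuinely delicate point is the interchange of summation and integration; I expect this to be the main obstacle, to be justified by the uniform convergence of the Struve series on compacta together with the absolute convergence of the resulting double (multiple) series guaranteed by $0<\Re(\mu)<\Re(\lambda+\mathbf{p})+n$.
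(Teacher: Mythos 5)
Your proposal is correct and follows essentially the same route as the paper's own proof: series expansion of each $\mathcal{W}_{p_j,b,c}$ via \eqref{GStruve}, term-by-term integration using the Oberhettinger formula \eqref{Ober-formula} with the shifted exponent $\lambda+\mathbf{p}+n+2K$, conversion of the Gamma factors to Pochhammer symbols together with the identity \eqref{Poch-id1} to absorb the factor $\Lambda_0+2K$, and identification of the resulting multiple series with \eqref{Lauricella-Def}. Your treatment is in fact slightly more careful than the paper's, which justifies the interchange of summation and integration only by the phrase ``which is verified under the given conditions of this theorem.''
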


\begin{proof}
Let $\mathcal{L}$ be the left-hand side of \eqref{Th1-eq1}.
By using the generalized Struve function \eqref{GStruve} and changing the order of integration
and summations, which is verified under the given conditions of this theorem, we have
\begin{equation}\label{Th1-pf1}
  \aligned
 \mathcal{L} = & \sum_{k_1=0}^{\infty}\, \frac{(-c)^{k_1}\, \left(\frac{y_1}{2}\right)^{2k_1+p_1+1}}
 {\Gamma\left(k_1+\frac{3}{2}\right)\, \Gamma\left(k_1+p_1 + \frac{b+2}{2}\right)}
  \cdots \sum_{k_n=0}^{\infty}\, \frac{(-c)^{k_n}\, \left(\frac{y_n}{2}\right)^{2k_n+p_n+1}}
 {\Gamma\left(k_n+\frac{3}{2}\right)\, \Gamma\left(k_n+p_n + \frac{b+2}{2}\right)} \\
& \times \int_{0}^{\infty }x^{\mu -1}\left( x+a+\sqrt{x^{2}+2ax}\right) ^{-\lambda-(p_1+\cdots+p_n)-2(k_1+\cdots+k_n)-n}
    \,dx.
  \endaligned
\end{equation}
By using \eqref{Ober-formula} to evaluate the integral in \eqref{Th1-pf1} and interpreting  the resulting expression
in terms of the   Pochhammer symbol  defined (for $\lambda,\,\nu \in \mathbb{C}$) by
\begin{equation}\label{Poch-symbol}
  (\lambda)_\nu := \frac{\Gamma (\lambda +\nu)}{ \Gamma (\lambda)}
 =\left\{\aligned & 1  \hskip 49 mm (\nu=0;\,\, \lambda \in \mathbb{C}\setminus \{0\}) \\
        & \lambda (\lambda +1) \cdots (\lambda+n-1) \hskip 10mm (\nu=n \in {\mathbb N};\,\, \lambda \in \mathbb{C})
   \endaligned \right. \\
\end{equation}
together with the following easily-derivable identity:
\begin{equation}\label{Poch-id1}
  \lambda + k = \frac{\lambda \cdot (1+\lambda)_k }{(\lambda)_k} \quad \left(k \in \mathbb{N}_0\right),
\end{equation}
we obtain
\begin{equation*}
  \aligned
  \mathcal{L} = & \Lambda (a,\mathbf{p}, \lambda, \mu, p_j,y_j:j=1,\ldots,n) \\
    & \times
    \sum_{k_1,\ldots,k_n=0}^{\infty}\, \frac{(-c)^{k_1}\, \left(\frac{y_1}{2}\right)^{2k_1} \cdots (-c)^{k_n}\, \left(\frac{y_n}{2}\right)^{2k_n}\, a^{-2(k_1+\cdots+k_n)}  }
      {\left(\frac{3}{2}\right)_{k_1}\,  \left(p_1 + \frac{b+2}{2}\right)_{k_1} \cdots
  \left(\frac{3}{2}\right)_{k_n}\,  \left(p_n + \frac{b+2}{2}\right)_{k_n}  } \\
   &\hskip 5mm  \times \frac{(1+\lambda +\mathbf{p}+n)_{2(k_1+\cdots+k_n)}\,  (\lambda +\mathbf{p}+n-\mu)_{2(k_1+\cdots+k_n)} }
      {(\lambda +\mathbf{p}+n)_{2(k_1+\cdots+k_n)}\,  (1+\lambda +\mathbf{p}+n+\mu)_{2(k_1+\cdots+k_n)}},
   \endaligned
\end{equation*}
which, upon the multiple summations being expressed in terms of \eqref{Lauricella-Def}, leads to the right-hand side of
 \eqref{Th1-eq1}.

\end{proof}

\vskip 3mm

\begin{theorem} \label{Th2}
Let $n \in \mathbb{N}$ be fixed and $a,\, y_j \in \mathbb{R}^+$ $(j=1,\ldots,n)$.
Also, let $b,\,c,\,p_j,\, \lambda,\,\mu \in \mathbb{C}$ $(j=1,\ldots,n)$ with
  $\Re(\mu + \mathbf{p})>-n$ and $\Re(\lambda)>\Re(\mu)$.
Then
\begin{equation}\label{Th2-eq1}
 \aligned
   &\int_{0}^{\infty }x^{\mu -1}\left( x+a+\sqrt{x^{2}+2ax}\right) ^{-\lambda
}\prod\limits_{j=1}^{n}\mathcal{W}_{p_{j},b,c}\left( \frac{x\,y_{j}}{x+a+\sqrt{
x^{2}+2ax}}\right)\, dx \\
 &= \mathcal{B} (a,\mathbf{p}, \lambda, \mu, p_j,y_j:j=1,\ldots,n)
 \\
&\hskip 5mm \times F^{2:1;\cdots;1}_{\,2:2;\cdots;2}\, \bigg(
    \aligned
      &\hskip 5mm [2\mu+ 2\mathbf{p}+2n:4,\ldots,4], [1+\lambda+\mathbf{p}+n:2,\ldots,2] :  \\
      & [1+\lambda+\mu+2\mathbf{p}+2n:4,\ldots,4], [\lambda+\mathbf{p}+n:2,\ldots,2]:
      \endaligned  \\
  &  \hskip 5mm   \aligned
      & \hskip 65mm  [1:1]; \cdots ; [1:1]; \\
    & \Big[\frac{3}{2}:1\Big], \Big[p_1 + \frac{b+2}{2}:1\Big];\cdots  ;\Big[\frac{3}{2}:1\Big],
     \Big[p_n + \frac{b+2}{2}:1\Big];
   \endaligned  -\frac{c\,y_1^2}{16},\cdots, -\frac{c\,y_n^2}{16}   \bigg),
 \endaligned
\end{equation}
where $1;\cdots;1$ and $2;\cdots;2$ are $n$ arrays, respectively, and
  \begin{equation*}
\aligned &
    \mathcal{B} (a,\mathbf{p}, \lambda, \mu, p_j,y_j:j=1,\ldots,n) \\
    &    := \frac{(\lambda+ \mathbf{p}+n)\, 2^{1-\mu-2\mathbf{p}-2n }\,a^{\mu-\lambda}\, \Gamma (\lambda-\mu)\,\Gamma (2\mu+ 2\mathbf{p}+2n )\,
      \prod\limits_{j=1}^{n}y_j^{p_j+1} }{\left\{\Gamma\left(\frac{3}{2}\right)\right\}^n\,
     \Gamma (1+\lambda+\mu+ 2\mathbf{p}+2n)\,\prod\limits_{j=1}^{n} \Gamma \left(p_j + \frac{b+2}{2}\right) },
\endaligned
  \end{equation*}
and $\mathbf{p}$ is the same as in \eqref{p}.
\end{theorem}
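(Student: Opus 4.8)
The plan is to mirror the proof of Theorem~\ref{Th1}, the single structural change being the extra factor $x$ inside the argument of each generalized Struve function. Writing $\mathcal{L}$ for the left-hand side of \eqref{Th2-eq1}, I would insert the series \eqref{GStruve} for each factor $\mathcal{W}_{p_j,b,c}\!\left(\frac{x\,y_j}{x+a+\sqrt{x^2+2ax}}\right)$ and interchange the $n$-fold summation with the integration. Because the argument now carries a factor $x$, the $k_j$-th term of the $j$-th series contributes $x^{2k_j+p_j+1}$ to the integrand together with $\left(x+a+\sqrt{x^2+2ax}\right)^{-(2k_j+p_j+1)}$. Collecting these over $j=1,\ldots,n$ and writing $\mathbf{p}=\sum_{j}p_j$ and $K=k_1+\cdots+k_n$, the inner integral becomes
\begin{equation*}
\int_0^\infty x^{(\mu+2K+\mathbf{p}+n)-1}\left(x+a+\sqrt{x^2+2ax}\right)^{-(\lambda+2K+\mathbf{p}+n)}\,dx,
\end{equation*}
which is precisely Oberhettinger's integral \eqref{Ober-formula} with the shifted parameters $\mu'=\mu+2K+\mathbf{p}+n$ and $\lambda'=\lambda+2K+\mathbf{p}+n$.

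Applying \eqref{Ober-formula}, the crucial observation is that its hypothesis $0<\Re(\mu')<\Re(\lambda')$ holds for \emph{every} $K\in\mathbb{N}_0$ exactly under the conditions imposed in the statement: $\Re(\mu')=\Re(\mu+\mathbf{p})+n+2K$ is smallest at $K=0$, so $\Re(\mu')>0$ for all $K$ is equivalent to $\Re(\mu+\mathbf{p})>-n$, while $\Re(\lambda')-\Re(\mu')=\Re(\lambda)-\Re(\mu)$ turns $\Re(\mu')<\Re(\lambda')$ into $\Re(\lambda)>\Re(\mu)$. The evaluation produces the $K$-dependent factor
\begin{equation*}
2\,(\lambda+\mathbf{p}+n+2K)\,a^{\mu-\lambda}\,2^{-(\mu+2K+\mathbf{p}+n)}\,\frac{\Gamma(2\mu+2\mathbf{p}+2n+4K)\,\Gamma(\lambda-\mu)}{\Gamma(1+\lambda+\mu+2\mathbf{p}+2n+4K)},
\end{equation*}
and I would stress that, unlike in Theorem~\ref{Th1}, the powers of $a$ here collapse to the $K$-independent $a^{\mu-\lambda}$, which is why no factor $a^{-2}$ survives in the Lauricella arguments.

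What remains is systematic bookkeeping. Using \eqref{Poch-symbol} I would write $\Gamma(2\mu+2\mathbf{p}+2n+4K)=\Gamma(2\mu+2\mathbf{p}+2n)\,(2\mu+2\mathbf{p}+2n)_{4K}$ and likewise for the denominator Gamma, yielding the coefficient-$4$ parameters $[2\mu+2\mathbf{p}+2n:4,\ldots,4]$ and $[1+\lambda+\mu+2\mathbf{p}+2n:4,\ldots,4]$; the linear prefactor $\lambda+\mathbf{p}+n+2K$ is linearized by the identity \eqref{Poch-id1} into the coefficient-$2$ pair $[1+\lambda+\mathbf{p}+n:2,\ldots,2]$ over $[\lambda+\mathbf{p}+n:2,\ldots,2]$. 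The $K$-independent powers of $2$ assemble into the $2^{1-\mu-2\mathbf{p}-2n}$ sitting in $\mathcal{B}$, whereas the $K$-dependent $2^{-2K}$ from the Struve series and the $2^{-2K}$ from Oberhettinger combine to $16^{-K}$; together with $\prod_{j}(-c)^{k_j}y_j^{2k_j}$ this reproduces the Lauricella arguments $-c\,y_j^2/16$. Finally, adjoining a $[1:1]$ array in each variable supplies the factor $(1)_{k_j}=k_j!$ demanded by \eqref{Lauricella-Def}, the surviving denominator Pochhammers $(\tfrac{3}{2})_{k_j}$ and $(p_j+\tfrac{b+2}{2})_{k_j}$ become the $[\tfrac{3}{2}:1],[p_j+\tfrac{b+2}{2}:1]$ arrays, and all collected constants form $\mathcal{B}$, delivering the right-hand side of \eqref{Th2-eq1}.

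The only genuinely delicate step is justifying the interchange of the multiple summation with the integration. This is where the two inequalities earn their keep: $\Re(\lambda)>\Re(\mu)$ controls the decay of the integrand as $x\to\infty$ and $\Re(\mu+\mathbf{p})>-n$ controls its behaviour near $x=0$, so that for each multi-index $(k_1,\ldots,k_n)$ the corresponding shifted Oberhettinger integral converges; since each Struve series \eqref{GStruve} is entire, the majorized multiple series converges absolutely, and Fubini's theorem then legitimizes the term-by-term integration. Everything past that point is the routine Pochhammer-and-power-of-two accounting sketched above.
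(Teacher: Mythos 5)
Your proposal is correct and is precisely the argument the paper intends: the paper's own ``proof'' of Theorem~\ref{Th2} consists only of the remark that it runs parallel to Theorem~\ref{Th1}, and your write-up supplies exactly those omitted details without error --- the shifted Oberhettinger parameters $\mu'=\mu+2K+\mathbf{p}+n$, $\lambda'=\lambda+2K+\mathbf{p}+n$, the $K$-independent factor $a^{\mu-\lambda}$ and $\Gamma(\lambda-\mu)$, the $16^{-K}$ producing the arguments $-c\,y_j^2/16$, and the translation of the hypotheses $\Re(\mu+\mathbf{p})>-n$, $\Re(\lambda)>\Re(\mu)$ into the validity of \eqref{Ober-formula} for every multi-index. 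Your Fubini justification is in fact more careful than anything stated in the paper, which merely asserts the interchange is ``verified under the given conditions.''
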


\begin{proof}
The proof runs parallel to that of Theorem \ref{Th1}. We omit the details.
\end{proof}

\section{Special cases}\label{sec3}
Here, we consider some special cases of the main results. 

\vskip 3mm

Setting $n=1$ in the result in Theorem \ref{Th1}, we have an integral formula involving  the generalized Struve
function \eqref{GStruve}, which is given in Corollary \ref{cor1}.

\vskip 3mm

\begin{corollary}\label{cor1}
Let $a,\, y \in \mathbb{R}^+$.
Also, let $b,\,c,\,p,\, \lambda,\,\mu \in \mathbb{C}$ with
$0<\Re(\mu)<\Re \left(\lambda + p  \right) +1$.
Then
\begin{equation}\label{cor1-eq1}
  \aligned
  & \int_{0}^{\infty }x^{\mu -1}\left( x+a+\sqrt{x^{2}+2ax}\right) ^{-\lambda }
\mathcal{W}_{p,b,c}\left( \frac{y}{x+a+\sqrt{x^{2}+2ax}}\right)\, dx \\
  &= (1+\lambda+p)\,2^{-\mu-p}\,a^{\mu-1-\lambda-p}\,y^{p+1}\,
     \frac{\Gamma (2 \mu)\, \Gamma (1+\lambda+p-\mu)}{\Gamma \left(\frac{3}{2}\right)\,\Gamma (2+\lambda+p+\mu)\,
     \Gamma \left(1+\frac{b}{2}+p\right) } \\
   & \times {}_4F_5 \left[\begin{array}{r}
                         \frac{3}{2} + \frac{\lambda}{2}+\frac{p}{2},\, \frac{1}{2} + \frac{\lambda}{2}+\frac{p}{2}-\frac{\mu}{2},\, 1 +  \frac{\lambda}{2}+\frac{p}{2}-\frac{\mu}{2},\,1;\,  \\
          \frac{1}{2} + \frac{\lambda}{2}+\frac{p}{2},\, 1 +  \frac{\lambda}{2}+\frac{p}{2}+\frac{\mu}{2},\,\frac{3}{2} + \frac{\lambda}{2}+\frac{p}{2}+\frac{\mu}{2},\, 1+ \frac{b}{2}+p,\, \frac{3}{2};\,
                          \end{array}
  - \frac{c y^2}{4 a^2}   \right].
  \endaligned
\end{equation}

\end{corollary}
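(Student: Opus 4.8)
The plan is to obtain \eqref{cor1-eq1} by specializing Theorem~\ref{Th1} to the case $n=1$ and then collapsing the resulting single-variable generalized Lauricella function into a ${}_4F_5$. First I would set $n=1$ throughout \eqref{Th1-eq1}, writing $p_1=p$ and $y_1=y$, so that $\mathbf{p}=p$ and the hypothesis $0<\Re(\mu)<\Re(\lambda+\mathbf{p})+n$ becomes exactly $0<\Re(\mu)<\Re(\lambda+p)+1$. The prefactor $\mathcal{A}$ then simplifies by elementary manipulation: $2^{1-\mu-\mathbf{p}-n}=2^{-\mu-p}$, $a^{\mu-\lambda-\mathbf{p}-n}=a^{\mu-1-\lambda-p}$, $p+\tfrac{b+2}{2}=1+\tfrac{b}{2}+p$, $\Gamma(\lambda+\mathbf{p}+n-\mu)=\Gamma(1+\lambda+p-\mu)$, and $\Gamma(1+\lambda+\mathbf{p}+n+\mu)=\Gamma(2+\lambda+p+\mu)$, which reproduces the scalar prefactor displayed in \eqref{cor1-eq1}.

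The substance of the argument is the reduction of the single-variable function $F^{2:1}_{2:2}$ to ${}_4F_5$. By \eqref{Lauricella-Def}--\eqref{Omega} with $n=1$, this function is the one-fold series
\begin{equation*}
\sum_{k=0}^{\infty}\frac{(2+\lambda+p)_{2k}\,(1+\lambda+p-\mu)_{2k}\,(1)_{k}}{(1+\lambda+p)_{2k}\,(2+\lambda+p+\mu)_{2k}\,\left(\tfrac{3}{2}\right)_{k}\,\left(1+\tfrac{b}{2}+p\right)_{k}}\,\frac{z^{k}}{k!},\qquad z=-\frac{c\,y^{2}}{4\,a^{2}}.
\end{equation*}
I would then apply the Legendre duplication formula $(\alpha)_{2k}=2^{2k}\left(\tfrac{\alpha}{2}\right)_{k}\left(\tfrac{\alpha+1}{2}\right)_{k}$ to each of the four Pochhammer symbols carrying the index $2k$. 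The powers $2^{2k}$ produced by the two numerator factors cancel exactly against those produced by the two denominator factors. After splitting, the numerator contributes $\left(1+\tfrac{\lambda+p}{2}\right)_{k}$, $\left(\tfrac{3}{2}+\tfrac{\lambda+p}{2}\right)_{k}$, $\left(\tfrac{1}{2}+\tfrac{\lambda+p}{2}-\tfrac{\mu}{2}\right)_{k}$, $\left(1+\tfrac{\lambda+p}{2}-\tfrac{\mu}{2}\right)_{k}$, while the denominator contributes $\left(\tfrac{1}{2}+\tfrac{\lambda+p}{2}\right)_{k}$, $\left(1+\tfrac{\lambda+p}{2}\right)_{k}$, $\left(1+\tfrac{\lambda+p}{2}+\tfrac{\mu}{2}\right)_{k}$, $\left(\tfrac{3}{2}+\tfrac{\lambda+p}{2}+\tfrac{\mu}{2}\right)_{k}$.

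Finally I would cancel the common factor $\left(1+\tfrac{\lambda+p}{2}\right)_{k}$ occurring on both sides, which leaves three genuine numerator parameters together with the surviving $(1)_{k}$, and the five denominator parameters $\tfrac{1}{2}+\tfrac{\lambda+p}{2}$, $1+\tfrac{\lambda+p}{2}+\tfrac{\mu}{2}$, $\tfrac{3}{2}+\tfrac{\lambda+p}{2}+\tfrac{\mu}{2}$, $1+\tfrac{b}{2}+p$, $\tfrac{3}{2}$. Since $(1)_{k}$ fills the implicit $k!$ slot of the ${}_pF_q$ notation, the series is precisely the ${}_4F_5$ appearing in \eqref{cor1-eq1}, with argument $z=-c\,y^{2}/(4a^{2})$. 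The only point demanding care is the bookkeeping of the duplication step: one must check that the $2^{2k}$ factors cancel completely and that $1+\tfrac{\lambda+p}{2}$ really appears on both sides so as to drop out, this being where an off-by-one in the half-integer shifts would most easily creep in.
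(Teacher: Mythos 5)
Your proposal is correct and follows exactly the paper's route: the paper derives Corollary \ref{cor1} simply by setting $n=1$ in Theorem \ref{Th1}, leaving implicit the reduction of the one-variable Lauricella function $F^{2:1}_{2:2}$ to the stated ${}_4F_5$. Your explicit use of the duplication identity $(\alpha)_{2k}=2^{2k}\left(\tfrac{\alpha}{2}\right)_k\left(\tfrac{\alpha+1}{2}\right)_k$, the cancellation of the $2^{2k}$ factors, and the dropping of the common factor $\left(1+\tfrac{\lambda+p}{2}\right)_k$ correctly supplies the bookkeeping the paper omits.
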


\vskip 3mm

Setting $n=1$ in the result in Theorem \ref{Th2}, we have another integral formula involving  the generalized Struve
function \eqref{GStruve}, which is given in Corollary \ref{cor2}.

\vskip 3mm

\begin{corollary}\label{cor2}
Let $a,\, y \in \mathbb{R}^+$.
Also, let $b,\,c,\,p,\, \lambda,\,\mu \in \mathbb{C}$ with
$\Re(\mu +p)>-1$ and $\Re(\lambda)>\Re(\mu)$.
Then
\begin{equation}\label{cor2-eq1}
\aligned
 & \int_{0}^{\infty }x^{\mu -1}\left( x+a+\sqrt{x^{2}+2ax}\right) ^{-\lambda }
\mathcal{W}_{p,b,c}\left( \frac{xy}{x+a+\sqrt{x^{2}+2ax}}\right) \,dx \\
&\hskip 5mm = 2^{-\mu-2p-1}\, a^{\mu-\lambda}\,y^{p+1}\, \Gamma (\lambda-\mu) \\
& \times {}_3 \Psi_4\left[ \aligned \left(1,\,1\right),\,\left(\lambda+p+2,\,2\right),\, \left(2\mu+2p+2,\,4\right)  \, &;\\
                      \left(\frac{3}{2},\,1\right),\,\left(p+\frac{b+2}{2},\,1\right),\,\left(\lambda+p+1,\,2\right),
    \,\left(\lambda+\mu +2p+3,\,4\right)    \,&; \endaligned
                  \,\, - \frac{c\,y^2}{16} \right].
\endaligned
\end{equation}

\end{corollary}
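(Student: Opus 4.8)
The plan is to read off Corollary \ref{cor2} from Theorem \ref{Th2} by specializing $n=1$ and then recasting the resulting one-variable generalized Lauricella series as a Fox--Wright function through the definition \eqref{Fox-Wright}. First I would put $n=1$ in \eqref{Th2-eq1}, so that $\mathbf{p}=p$ and $y_1=y$; the hypotheses $\Re(\mu+\mathbf{p})>-n$ and $\Re(\lambda)>\Re(\mu)$ of Theorem \ref{Th2} then reduce to exactly the assumptions $\Re(\mu+p)>-1$ and $\Re(\lambda)>\Re(\mu)$ made here, so the formula applies. With a single summation index $k$, the function $F^{2:1}_{2:2}$ collapses, by \eqref{Lauricella-Def}--\eqref{Omega}, to
\[
\sum_{k=0}^{\infty}\frac{(2\mu+2p+2)_{4k}\,(\lambda+p+2)_{2k}\,(1)_k}{(\lambda+\mu+2p+3)_{4k}\,(\lambda+p+1)_{2k}\,\left(\frac32\right)_k\,\left(p+\frac{b+2}{2}\right)_k}\,\frac{z^k}{k!},\qquad z:=-\frac{cy^2}{16},
\]
while the prefactor $\mathcal{B}$ takes its $n=1$ value, with $\Gamma(2\mu+2p+2)$, $\Gamma(\lambda+\mu+2p+3)$, $\Gamma(p+\frac{b+2}{2})$, $\Gamma(\frac32)$ and the leading factor $\lambda+p+1$ in evidence.

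Next I would convert every Pochhammer symbol $(\alpha)_{mk}$ appearing above back to the Gamma ratio $\Gamma(\alpha+mk)/\Gamma(\alpha)$ via \eqref{Poch-symbol}, so that each parameter becomes a Fox--Wright pair $(\alpha,m)$ and a constant $1/\Gamma(\alpha)$ (for numerator parameters) or $\Gamma(\alpha)$ (for denominator parameters) is extracted. The one point deserving care is the pair $(\lambda+p+2,2)$ in the numerator against $(\lambda+p+1,2)$ in the denominator: their constant $(k=0)$ contribution is $\Gamma(\lambda+p+1)/\Gamma(\lambda+p+2)=1/(\lambda+p+1)$, which cancels precisely the leading factor $\lambda+p+1$ carried by $\mathcal{B}$. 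Likewise the constants $\Gamma(2\mu+2p+2)$, $\Gamma(\lambda+\mu+2p+3)$, $\Gamma(\frac32)$ and $\Gamma(p+\frac{b+2}{2})$ produced by the remaining conversions cancel the matching Gamma factors in $\mathcal{B}$; what survives of $\mathcal{B}$ is exactly $2^{-\mu-2p-1}\,a^{\mu-\lambda}\,y^{p+1}\,\Gamma(\lambda-\mu)$, the prefactor appearing in \eqref{cor2-eq1}.

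Finally I would identify the remaining Gamma-ratio series with a Fox--Wright function. After the conversion the surviving summand is
\[
\frac{\Gamma(1+k)\,\Gamma(\lambda+p+2+2k)\,\Gamma(2\mu+2p+2+4k)}{\Gamma(\frac32+k)\,\Gamma(p+\frac{b+2}{2}+k)\,\Gamma(\lambda+p+1+2k)\,\Gamma(\lambda+\mu+2p+3+4k)}\,\frac{z^k}{k!},
\]
which is precisely the summand of \eqref{Fox-Wright} for
\[
{}_3\Psi_4\!\left[(1,1),(\lambda+p+2,2),(2\mu+2p+2,4);\,(\tfrac32,1),(p+\tfrac{b+2}{2},1),(\lambda+p+1,2),(\lambda+\mu+2p+3,4);\,z\right];
\]
the numerator pair $(1,1)$ contributes $\Gamma(1+k)=k!$, which is divided out by the $1/k!$ built into the Fox--Wright definition, so no further constant is introduced, and the admissibility condition \eqref{Condi} holds since $1+(1+1+2+4)-(1+2+4)=2\geqq 0$. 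Combining the three steps gives \eqref{cor2-eq1}. I do not anticipate any real obstacle: the computation is entirely bookkeeping, the only delicate point being to route the leading factor $\lambda+p+1$ into the cancellation coming from the paired parameters $\lambda+p+2$ and $\lambda+p+1$ and to keep the $1/k!$ convention of \eqref{Fox-Wright} consistent throughout.
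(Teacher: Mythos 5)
Your proposal is correct and follows exactly the paper's route: the paper obtains Corollary \ref{cor2} precisely by setting $n=1$ in Theorem \ref{Th2} and rewriting the resulting one-variable generalized Lauricella series as a Fox--Wright ${}_3\Psi_4$ via \eqref{Fox-Wright} and \eqref{Poch-symbol}. Your bookkeeping checks out — in particular the cancellation of the factor $\lambda+p+1$ in $\mathcal{B}$ against $\Gamma(\lambda+p+1)/\Gamma(\lambda+p+2)$, and the survival of exactly $2^{-\mu-2p-1}a^{\mu-\lambda}y^{p+1}\Gamma(\lambda-\mu)$ — which is more detail than the paper itself records.
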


\vskip 3mm

It is easy to see from \eqref{Struve} and \eqref{GStruve} that 
\begin{equation}\label{H-W}
 \mathcal{W}_{p,-1,1}(z)=\mathcal{H}_p(z).
\end{equation}
Considering the relation \eqref{H-W} and setting $n=1$, $b=-1$, and $c=1$ in Theorems \ref{Th1} and \ref{Th2}
(or setting  $b=-1$ and $c=1$ in Corollaries \ref{cor1} and \ref{cor2}, we obtain two integral formulas
involving the Struve function $\mathcal{H}_p$ of order $p$, which are given, respectively, in Corollaries
 \ref{cor3} and \ref{cor4}.

\vskip 3mm 
\begin{corollary}\label{cor3}
Let $a,\, y \in \mathbb{R}^+$.
Also, let $p,\, \lambda,\,\mu \in \mathbb{C}$ with
$0<\Re(\mu)<\Re \left(\lambda + p  \right) +1$.
Then
\begin{equation}\label{cor3-eq1}
  \aligned
  & \int_{0}^{\infty }x^{\mu -1}\left( x+a+\sqrt{x^{2}+2ax}\right) ^{-\lambda }
\mathcal{W}_{p,b,c}\left( \frac{y}{x+a+\sqrt{x^{2}+2ax}}\right)\, dx \\
  &= (1+\lambda+p)\,2^{-\mu-p}\,a^{\mu-1-\lambda-p}\,y^{p+1}\,
     \frac{\Gamma (2 \mu)\, \Gamma (1+\lambda+p-\mu)}{\Gamma \left(\frac{3}{2}\right)\,\Gamma (2+\lambda+p+\mu)\,
     \Gamma \left(1+\frac{b}{2}+p\right) } \\
   & \times {}_4F_5 \left[\begin{array}{r}
                         \frac{3}{2} + \frac{\lambda}{2}+\frac{p}{2},\, \frac{1}{2} + \frac{\lambda}{2}+\frac{p}{2}-\frac{\mu}{2},\, 1 +  \frac{\lambda}{2}+\frac{p}{2}-\frac{\mu}{2},\,1;\,  \\
          \frac{1}{2} + \frac{\lambda}{2}+\frac{p}{2},\, 1 +  \frac{\lambda}{2}+\frac{p}{2}+\frac{\mu}{2},\,\frac{3}{2} + \frac{\lambda}{2}+\frac{p}{2}+\frac{\mu}{2},\,  \frac{1}{2}+p,\, \frac{3}{2};\,
                          \end{array}
  - \frac{y^2}{4 a^2}   \right].
  \endaligned
\end{equation}

\end{corollary}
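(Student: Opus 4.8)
The plan is to obtain Corollary \ref{cor3} as a direct specialization of Corollary \ref{cor1}, using the stated relation \eqref{H-W} between the generalized Struve function and the classical Struve function. First I would invoke Corollary \ref{cor1}, which is already established for arbitrary $b,c \in \mathbb{C}$, and specialize the parameters by setting $b=-1$ and $c=1$. Under \eqref{H-W} we have $\mathcal{W}_{p,-1,1}(z)=\mathcal{H}_p(z)$, so the integrand on the left-hand side collapses to the Struve-function integrand claimed in \eqref{cor3-eq1}. The hypotheses on $\lambda,\mu,p$ are inherited verbatim, so no new convergence analysis is needed: the constraint $0<\Re(\mu)<\Re(\lambda+p)+1$ carries over unchanged.

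Next I would substitute $b=-1$ and $c=1$ into the right-hand side of \eqref{cor1-eq1} and simplify. The only parameters affected are those involving $b$ and $c$. In the prefactor, $\Gamma\left(1+\frac{b}{2}+p\right)$ becomes $\Gamma\left(\frac{1}{2}+p\right)$, while the explicit factors in $2^{-\mu-p}$, $a^{\mu-1-\lambda-p}$, $y^{p+1}$, and the remaining Gamma quotient are untouched. In the lower parameter list of the ${}_4F_5$, the entry $1+\frac{b}{2}+p$ likewise becomes $\frac{1}{2}+p$, and the argument $-\frac{cy^2}{4a^2}$ becomes $-\frac{y^2}{4a^2}$. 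All other entries of the hypergeometric function, as well as the factor $(1+\lambda+p)$, are independent of $b$ and $c$ and therefore remain exactly as in \eqref{cor1-eq1}.

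Matching these specialized expressions term by term against the asserted right-hand side of \eqref{cor3-eq1} completes the argument. The substitution is purely mechanical, and the main (indeed only) point requiring care is the bookkeeping of which parameters depend on $b$ and $c$; I expect no genuine obstacle, since the structural identity \eqref{H-W} guarantees that the two integrands coincide and the corollary is nothing more than the evaluation of \eqref{cor1-eq1} at the admissible values $b=-1$, $c=1$.
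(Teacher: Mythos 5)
Your proof is correct and is exactly the paper's own route: the paper obtains Corollary \ref{cor3} by setting $b=-1$ and $c=1$ in Corollary \ref{cor1} via the relation \eqref{H-W}, precisely as you propose. Be aware only that the printed statement of \eqref{cor3-eq1} still shows $\mathcal{W}_{p,b,c}$ in the integrand and $\Gamma\left(1+\frac{b}{2}+p\right)$ in the prefactor — leftover typographical slips from an incomplete substitution — so your fully specialized version (with $\mathcal{H}_p$ and $\Gamma\left(\tfrac{1}{2}+p\right)$) is what the corollary is intended to assert.
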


\vskip 3mm

\begin{corollary}\label{cor4}
Let $a,\, y \in \mathbb{R}^+$.
Also, let $p,\, \lambda,\,\mu \in \mathbb{C}$ with
$\Re(\mu +p)>-1$ and $\Re(\lambda)>\Re(\mu)$.
Then
\begin{equation}\label{cor4-eq1}
\aligned
 & \int_{0}^{\infty }x^{\mu -1}\left( x+a+\sqrt{x^{2}+2ax}\right) ^{-\lambda }
\mathcal{W}_{p,b,c}\left( \frac{xy}{x+a+\sqrt{x^{2}+2ax}}\right) \,dx \\
&\hskip 5mm = 2^{-\mu-2p-1}\, a^{\mu-\lambda}\,y^{p+1}\, \Gamma (\lambda-\mu) \\
& \times {}_3 \Psi_4\left[ \aligned \left(1,\,1\right),\,\left(\lambda+p+2,\,2\right),\, \left(2\mu+2p+2,\,4\right)  \, &;\\
                      \left(\frac{3}{2},\,1\right),\,\left(p+\frac{1}{2},\,1\right),\,\left(\lambda+p+1,\,2\right),
    \,\left(\lambda+\mu +2p+3,\,4\right)    \,&; \endaligned
                  \,\, - \frac{y^2}{16} \right].
\endaligned
\end{equation}

\end{corollary}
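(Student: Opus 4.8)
The plan is to obtain Corollary \ref{cor4} as a direct specialization of Corollary \ref{cor2} under the parameter choice $b=-1$ and $c=1$, invoking the reduction \eqref{H-W} of the generalized Struve function to the classical Struve function $\mathcal{H}_p$. The first step is to confirm \eqref{H-W} at the level of series: substituting $b=-1$ and $c=1$ into the definition \eqref{GStruve} replaces $p+\frac{b+2}{2}$ by $p+\frac{1}{2}$ and $(-c)^k$ by $(-1)^k$, so the series becomes $\sum_{k=0}^{\infty}\frac{(-1)^k}{\Gamma\left(k+\frac{3}{2}\right)\Gamma\left(k+p+\frac{1}{2}\right)}\left(\frac{z}{2}\right)^{2k+p+1}$, which is precisely the series \eqref{Struve} for $\mathcal{H}_p(z)$ with $\upsilon=p$. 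Hence the integrand appearing in Corollary \ref{cor4} is exactly the integrand of Corollary \ref{cor2} with $b=-1$ and $c=1$.

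Next I would check that the hypotheses transfer without change. The conditions of Corollary \ref{cor2}, namely $\Re(\mu+p)>-1$ and $\Re(\lambda)>\Re(\mu)$, involve neither $b$ nor $c$; they coincide verbatim with the conditions stated in Corollary \ref{cor4}. Thus the specialization is admissible exactly under the hypotheses imposed in the corollary, and no additional convergence analysis is required beyond that already carried out for Corollary \ref{cor2}.

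Finally I would substitute $b=-1$ and $c=1$ into the closed form \eqref{cor2-eq1}. The prefactor $2^{-\mu-2p-1}\,a^{\mu-\lambda}\,y^{p+1}\,\Gamma(\lambda-\mu)$ is independent of $b$ and $c$ and is therefore unchanged. Inside the ${}_3\Psi_4$, the sole $b$-dependent entry is the lower parameter $\left(p+\frac{b+2}{2},\,1\right)$, which collapses to $\left(p+\frac{1}{2},\,1\right)$, while the sole $c$-dependent quantity is the argument $-\frac{c\,y^2}{16}$, which becomes $-\frac{y^2}{16}$. Assembling these substitutions reproduces \eqref{cor4-eq1} exactly.

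Since each step is a mechanical substitution into an identity already established, there is no genuine analytic difficulty. The only point requiring care is the bookkeeping, that is, verifying that $b$ and $c$ enter the right-hand side of Corollary \ref{cor2} \emph{only} through the single Wright parameter $p+\frac{b+2}{2}$ and through the series argument $-\frac{c\,y^2}{16}$. The explicit formula \eqref{cor2-eq1} makes this transparent, so the main (and very mild) obstacle is simply to track these two dependencies and confirm that nothing else in the formula changes.
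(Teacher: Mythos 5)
Your proposal is correct and is exactly the paper's own derivation: the paper obtains Corollary \ref{cor4} by setting $b=-1$ and $c=1$ in Corollary \ref{cor2} (equivalently $n=1$, $b=-1$, $c=1$ in Theorem \ref{Th2}), using the reduction \eqref{H-W} $\mathcal{W}_{p,-1,1}(z)=\mathcal{H}_p(z)$, just as you do. Your bookkeeping of where $b$ and $c$ enter---only through the Wright parameter $\left(p+\tfrac{b+2}{2},\,1\right)$ and the argument $-\tfrac{c\,y^2}{16}$---is accurate, and it also implicitly corrects a slip in the paper's statement, whose left-hand side should display $\mathcal{H}_p$ (i.e.\ $\mathcal{W}_{p,-1,1}$) rather than a generic $\mathcal{W}_{p,b,c}$.
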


\bigskip

\end{document}